\newenvironment{proof}[1][Proof]{\begin{trivlist}
\item[\hskip \labelsep {\bfseries #1}]}{\end{trivlist}}
\newtheorem{lemma}{Lemma}[section]
\begin{document}

\begin{frontmatter}

\title{Boundary control of a singular reaction-diffusion equation on a disk} 


\author[First]{Rafael Vazquez} 
\author[Second]{Miroslav Krstic} 

\address[First]{Department of Aerospace Engineering, Universidad de Sevilla, Camino de los Descubrimiento s.n., 41092 Sevilla, Spain (e-mail: rvazquez1@us.es).}                                              
\address[Second]{Department of Mechanical and Aerospace Engineering, University of California San Diego, La Jolla, CA 92093-0411, USA (e-mail: krstic@ucsd.edu).}


\begin{abstract}                          
Recently, the problem of boundary stabilization for unstable linear constant-coefficient reaction-diffusion equation on $n$-balls (in particular, disks and spheres) has been solved by means of the backstepping method. However, the extension of this result to spatially-varying coefficients is far from trivial. As a first step, this work deals with radially-varying reaction coefficients under revolution symmetry conditions on a disk (the 2-D case). Under these conditions, the equations become singular in the radius. When applying the backstepping method, the same type of singularity appears in the backstepping kernel equations. Traditionally, well-posedness of the kernel equations is proved by transforming them into integral equations and then applying the method of successive approximations. In this case, the resulting integral equation is singular. A successive approximation series can still be formulated, however its convergence is challenging to show due to the singularities. The problem is solved by a rather non-standard proof that uses the properties of the Catalan numbers, a well-known sequence frequently appearing in combinatorial mathematics.
\end{abstract}

\end{frontmatter}


\section{Introduction}
In a series of previous results, the problem boundary stabilization for unstable linear constant-coefficient reaction-diffusion equation on $n$-balls has been solved. In particular~\cite{disk} and~\cite{disk2} describe, respectively, the (full-state) control design for the particular case of a 2-D disk and a general $n$-ball; that same design, augmented with an observer, is applied in~\cite{jie} to multi-agent deployment in 3-D space, with the agents distributed on a disk-shaped grid and commanded by leader agents located at the boundary. The output-feedback generalization to $n$-balls is presented in~\cite{nball}. Older, related results that use backstepping include the design an output feedback law for a convection problem on an annular domain (see~\cite{convloop}, also~\cite{chengkang-xie}), or observer design on cuboid domains~\cite{kugi}. However, going from an annular domain to a disk (which includes the origin) complicates the design, as singularities appear on the equations and have to be dealt with.

This work can be seen as a first step towards extending this family of previous results to the non-constant coefficient case, by assuming a certain symmetry for the initial conditions, which simplifies the problem. There have been specific results on disk- or spherical-shaped domains, such as~\cite{Prieur} and~\cite{scott}, which have assumed these same symmetry conditions.

Based on the domain shape we use polar coordinates, and using the symmetry of the initial conditions and imposing an equally symmetric controller, the system is transformed into a single 1-D system with  singular terms. We design a feedback law for this system using the backstepping method~\cite{krstic}.  The backstepping method has proved itself to be an ubiquitous method for PDE control, with many other applications including, among others, flow control~\cite{vazquez,vazquez-coron}, nonlinear PDEs~\cite{vazquez2}, hyperbolic 1-D systems~\cite{vazquez-nonlinear,florent,krstic3}, adaptive control~\cite{krstic4}, wave equations~\cite{krstic2}, and delays~\cite{krstic5}. The main idea of backstepping is finding an invertible transformation that maps the system into a stable \emph{target} system which needs to be chosen judiciously. To find the transformation, a hyperbolic partial differential equation (called the \emph{kernel equation}) needs to be solved. Typically, the well-posedness of the kernel equation is studied by transforming it into an integral equation and then applying successive approximations to construct a solution. The convergence of the successive approximation series guarantees that a solution always exists, it is unique, and it is bounded. However, in the problem posed in this paper, one obtains a \emph{singular} kernel equation. Following the previously-outlined procedure, one can transform it into a (singular) integral equation and then apply the method of successive approximations. However, proving the convergence of the resulting series is challenging. The main technical contribution of this paper is tackling this issue. We use a rather non-standard proof based on a combinatorial sequence of integers (the Catalan numbers).

The structure of the paper is as follows. In Section~\ref{sec-plant} we introduce the problem and state our main result. We explain our design method and  find the control kernel equation in Section~\ref{sec-design}. Next, we prove its well-posedness in Section~\ref{sect-wp}. We conclude the paper with some remarks in Section~\ref{sec-conclusions}.

\section{2-D Reaction-Diffusion System on a Disk}\label{sec-plant}
Consider the following reaction-diffusion system on a disk, written in polar coordinates $(r,\theta)$:
\begin{eqnarray}\label{eqn-u}
u_t&=&\frac{\epsilon}{r} \left( r u_r \right)_r+\frac{\epsilon}{r^2} u_{\theta \theta}+\lambda(r) u,
\end{eqnarray}
evolving in the disk ${\cal D}_R=\{(r,\theta):r \in[0,R], \theta\in[0,2\pi)\}$, for $t>0$, with  boundary conditions 
\begin{eqnarray}\label{eqn-bcu}
u(t,R,\theta)&=&U(t,\theta),
\end{eqnarray}
where $U(t)$ is the actuation (we assume we can control all the boundary). Note that the system will be unstable for large values of $\lambda$.  

Denote by $L^2({\cal D}_R)$ the space of $L^2$ functions on the disk defined as usual. For the case when $\lambda$ does not depend on $r$, the following result was shown in~\cite{disk}:
\begin{thm}\label{th-main}
Consider (\ref{eqn-u})--(\ref{eqn-bcu}) with constant $\lambda>0$, with initial conditions $u_0\in L^2({\cal D}_R)$ and the following (explicit) full-state feedback law for $U$:
\begin{eqnarray}
 U(t,\theta)&=&-{1\over 2\pi}\frac{\lambda}{\epsilon}\int_0^R \rho \frac{\mathrm{I}_1\left[\sqrt{\frac{\lambda}{\epsilon}(R^2-\rho^2)}\right]}{\sqrt{\frac{\lambda}{\epsilon}(R^2-\rho^2)}}  
 \nonumber \\ && \times
\int_{-\pi}^\pi  \frac{(R^2- \rho^2) u(t,\rho,\psi)}{R^2+\rho^2- 2R \rho \cos\left(\theta-\psi \right)} d\psi d\rho,\quad \label{control}
\end{eqnarray}
where $\mathrm{I}_1$ is the first-order modified Bessel function of the first kind. Then system (\ref{eqn-u})--(\ref{eqn-bcu}) has a unique $L^2({\cal D}_R)$ solution, and the equilibrium profile $u\equiv 0$ is exponentially stable in the $L^2({\cal D}_R)$ norm, i.e., there exists $c_1,c_2>0$ such that
\begin{equation}
\Vert u(t,\cdot) \Vert_{L^2({\cal D}_R)} \leq c_1 \mathrm{e}^{-c_2t} \Vert u_0 \Vert_{L^2({\cal D}_R)}.
\end{equation}
\end{thm}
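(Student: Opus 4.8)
The plan is to use the backstepping method, exploiting the rotational invariance of the domain to reduce the disk problem to a family of singular one-dimensional problems indexed by the angular wavenumber $n$, to design a kernel for each, and to reassemble. Concretely, I would expand $u(t,r,\theta)=\sum_{n\in\mathbb{Z}}u_n(t,r)\mathrm{e}^{\mathrm{i}n\theta}$ and $U(t,\theta)=\sum_{n\in\mathbb{Z}}U_n(t)\mathrm{e}^{\mathrm{i}n\theta}$, so that each coefficient satisfies $\partial_t u_n=\frac{\epsilon}{r}(r\partial_r u_n)_r-\frac{\epsilon n^2}{r^2}u_n+\lambda u_n$ on $r\in(0,R)$, with $u_n(t,R)=U_n(t)$ and the regularity inherited at $r=0$ (boundedness of $u_n$, with vanishing radial derivative for $n=0$). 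By Parseval, $\Vert u(t,\cdot)\Vert_{L^2({\cal D}_R)}^2=2\pi\sum_{n\in\mathbb{Z}}\int_0^R|u_n(t,r)|^2\,r\,dr$, so it suffices to control the modes and sum back up.

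For each $n$ I would seek a Volterra transformation $w_n(t,r)=u_n(t,r)-\int_0^r K_n(r,\rho)\,u_n(t,\rho)\,\rho\,d\rho$ mapping the mode onto the target $\partial_t w_n=\frac{\epsilon}{r}(r\partial_r w_n)_r-\frac{\epsilon n^2}{r^2}w_n$, $w_n(t,R)=0$. Differentiating, integrating by parts (the weight $\rho\,d\rho$ turns the radial terms into perfect-derivative combinations whose boundary contributions at $\rho=0$ vanish), and matching terms yields for $K_n$ the singular hyperbolic equation $\epsilon\bigl(\partial_{rr}+\tfrac1r\partial_r-\tfrac{n^2}{r^2}\bigr)K_n-\epsilon\bigl(\partial_{\rho\rho}+\tfrac1\rho\partial_\rho-\tfrac{n^2}{\rho^2}\bigr)K_n=\lambda K_n$ on $\{0\le\rho\le r\le R\}$, with Goursat condition $K_n(r,r)=-\tfrac{\lambda}{2\epsilon}$ and regularity at $\rho=0$. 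For constant $\lambda$ this is solved in closed form by the product of the power $(\rho/r)^{|n|}$ with the classical modified-Bessel kernel, i.e. $K_n(r,\rho)=-\tfrac{\lambda}{\epsilon}(\rho/r)^{|n|}\,\mathrm{I}_1\!\bigl[\sqrt{\tfrac{\lambda}{\epsilon}(r^2-\rho^2)}\bigr]/\sqrt{\tfrac{\lambda}{\epsilon}(r^2-\rho^2)}$: the factor $(\rho/r)^{|n|}$ is annihilated by both $\bigl(\partial_{rr}+\tfrac1r\partial_r-\tfrac{n^2}{r^2}\bigr)$ and $\bigl(\partial_{\rho\rho}+\tfrac1\rho\partial_\rho-\tfrac{n^2}{\rho^2}\bigr)$, hence it commutes through and cancels the singular terms, leaving the Bessel factor $\phi(s)=\mathrm{I}_1(\sqrt s)/\sqrt s$, $s=\tfrac{\lambda}{\epsilon}(r^2-\rho^2)$, to satisfy the elementary identity $s\phi''+2\phi'=\tfrac14\phi$, which it does, while $\phi(0)=\tfrac12$ supplies the Goursat value. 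Evaluating at $r=R$ gives $U_n(t)=\int_0^R K_n(R,\rho)\,u_n(t,\rho)\,\rho\,d\rho$, and resumming with $\sum_{n\in\mathbb{Z}}(\rho/R)^{|n|}\mathrm{e}^{\mathrm{i}n(\theta-\psi)}=\tfrac{R^2-\rho^2}{R^2+\rho^2-2R\rho\cos(\theta-\psi)}$ reproduces exactly the feedback law~(\ref{control}).

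To conclude, the Volterra structure in $r$ provides a bounded inverse $u_n=w_n+\int_0^r L_n(r,\rho)\,w_n(t,\rho)\,\rho\,d\rho$ (with $L_n$ the inverse kernel, obtained the same way and likewise bounded on the triangle), and because the angular reassembly is convolution against the Poisson kernel, whose mass $\int_{-\pi}^\pi\tfrac{R^2-\rho^2}{R^2+\rho^2-2R\rho\cos\phi}\,d\phi=2\pi$ is independent of $\rho$, Young's inequality makes both the transformation and its inverse bounded operators on $L^2({\cal D}_R)$. Hence the closed-loop plant is equivalent, through a bounded invertible change of variables, to the target system, which is the heat equation $w_t=\epsilon\Delta w$ on ${\cal D}_R$ with homogeneous Dirichlet data; that system is well posed on $L^2({\cal D}_R)$ and satisfies $\Vert w(t,\cdot)\Vert_{L^2({\cal D}_R)}\le\mathrm{e}^{-\epsilon j_{0,1}^2 t/R^2}\Vert w(0,\cdot)\Vert_{L^2({\cal D}_R)}$, with $j_{0,1}$ the first positive zero of $\mathrm{J}_0$. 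Transporting the well-posedness and this estimate back through the transformation yields the stated result, with $c_2=\epsilon j_{0,1}^2/R^2$ and $c_1$ the product of the norms of the transformation and its inverse.

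The main obstacle is the kernel step: writing the singular hyperbolic problem correctly near the origin and verifying that the stated product of a power and a modified Bessel function solves it together with the regularity requirement at $\rho=0$. For constant $\lambda$ this reduces to a direct verification; when $\lambda=\lambda(r)$, however, no such closed form is available, and the convergence of the replacement successive-approximation series is precisely what must be established in the remainder of the paper by means of the Catalan numbers.
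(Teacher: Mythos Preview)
Your proposal is correct and matches the approach the paper attributes to~\cite{disk}: the paper does not prove Theorem~\ref{th-main} itself but quotes it as an earlier result, noting explicitly in Section~\ref{sec-kernels} that \cite{disk} ``decomposed the system equation~(\ref{eqn-u})--(\ref{eqn-bcu}) and the transformation~(\ref{eqn-2dtran}) in its Fourier components.'' Your Fourier-mode backstepping with the closed-form kernels $K_n(r,\rho)=-\tfrac{\lambda}{\epsilon}(\rho/r)^{|n|}\mathrm{I}_1\bigl[\sqrt{\tfrac{\lambda}{\epsilon}(r^2-\rho^2)}\bigr]\big/\sqrt{\tfrac{\lambda}{\epsilon}(r^2-\rho^2)}$ and the Poisson-kernel resummation is precisely that method, and it recovers~(\ref{control}); the only cosmetic difference is that you carry the polar weight $\rho\,d\rho$ inside the Volterra integral, so your $K_n$ differs from the paper's convention by a factor of $\rho$ (consistent with the paper's boundary condition $K(r,r)=-\tfrac{\lambda r}{2\epsilon}$ versus your $K_n(r,r)=-\tfrac{\lambda}{2\epsilon}$).
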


Now we try to extend this result to the case when $\lambda$ is a function of $r$. As a first approach, consider that the initial conditions do not depend on the angle (have revolution symmetry) and fix $U$ as a constant (also not depending on the angle $\theta$). Then, by symmetry, there is no angular dependence and one can drop the $\theta$ derivative in (\ref{eqn-u})--(\ref{eqn-bcu}), finding the following 1-D problem:
\begin{eqnarray}\label{eqn-usym}
u_t&=&\frac{\epsilon}{r} \left( r u_r \right)_r+\lambda(r) u,
\end{eqnarray}
for $r\in[0,R)$, $t>0$, with  boundary conditions 
\begin{eqnarray}\label{eqn-bcusym}
u(t,R)&=&U(t),
\end{eqnarray}

Stabilization of (\ref{eqn-usym})--(\ref{eqn-bcusym}) is simpler than (\ref{eqn-u})--(\ref{eqn-bcu}) because the system is now 1-D, but still challenging due to the singular terms in (\ref{eqn-usym}). In addition, an explicit expression for the controller is not possible as in (\ref{control}) due to the spatially-varying $\lambda(r)$. Our main result is as follows.
\begin{thm}\label{th-mainsym}
Consider (\ref{eqn-usym})--(\ref{eqn-bcusym}) with constant $\lambda>0$, with initial conditions $u_0\in L^2(0,R)$ not depending on the angle  and the following full-state feedback law for $U$:
\begin{eqnarray}
 U(t)&=&\int_0^R K(R,\rho) u(t,\rho) d\rho,\quad \label{controlsymm}
\end{eqnarray}
where the kernel $K(r,\rho)$ is obtained as the solution of
\begin{eqnarray}
 K_{rr} +\frac{ K_{r}}{r}-K_{\rho\rho}+\frac{ K_{\rho}}{\rho}-\frac{K}{\rho^2}
=\frac{\lambda(\rho)}{\epsilon} K \label{eqn-K}
\end{eqnarray}
with boundary conditions
\begin{eqnarray}
K(r,0)&=&0,\\
K(r,r) 
&=& -\int_0^r \frac{\lambda(\rho)  }{2 \epsilon }d\rho,\label{eqn-Kbc2}
\end{eqnarray}
in the domain $\mathcal T=\left\{(r,\rho):0\leq \rho \leq r \leq R\right\}$.
 Then system (\ref{eqn-u})--(\ref{eqn-bcu}) has a unique $L^2(0,R)$ solution, and the equilibrium profile $u\equiv 0$ is exponentially stable in the $L^2$ norm, i.e., there exists $c_1,c_2>0$ such that
\begin{equation}
\Vert u(t,\cdot) \Vert_{L^2(0,R)} \leq c_1 \mathrm{e}^{-c_2t} \Vert u_0 \Vert_{L^2(0,R)}.
\end{equation}
\end{thm}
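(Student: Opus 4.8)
The plan is to use the backstepping method, and the argument splits into four stages: constructing the transformation and the target system, proving well-posedness of the singular kernel equation (\ref{eqn-K})--(\ref{eqn-Kbc2}) (the crux), inverting the transformation, and transferring stability to the closed-loop plant.

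First, I would look for a Volterra transformation $w(t,r)=u(t,r)-\int_0^r K(r,\rho)u(t,\rho)\,d\rho$ mapping the plant (\ref{eqn-usym})--(\ref{eqn-bcusym}), in closed loop with (\ref{controlsymm}), onto the \emph{target system} $w_t=\frac{\epsilon}{r}(rw_r)_r$, $w(t,R)=0$, with $w$ regular at $r=0$ --- that is, the radially symmetric heat equation on ${\cal D}_R$ with homogeneous Dirichlet data, which is exponentially stable in $L^2$. Differentiating $w$ in $t$ and $r$, substituting the plant equation and integrating by parts twice in $\rho$, one verifies that the reaction term $\lambda(r)u$ is cancelled precisely when $K$ solves the hyperbolic PDE (\ref{eqn-K}); the boundary contribution at $\rho=r$ forces the diagonal condition (\ref{eqn-Kbc2}) (this is where $\tfrac{d}{dr}K(r,r)$ enters), the contribution at $\rho=0$ vanishes because $K(r,0)=0$ together with boundedness of $u$ at the origin, and the requirement $w(t,R)=0$ is exactly the feedback law (\ref{controlsymm}). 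The coefficients $\tfrac1r$, $\tfrac1\rho$ and $\tfrac{1}{\rho^2}$ in (\ref{eqn-K}) are inherited from the singular operator $\tfrac1r(r\cdot)_r$ appearing in the plant and in the target.

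Second --- and here is where the real work lies --- I would prove well-posedness of (\ref{eqn-K})--(\ref{eqn-Kbc2}). The substitution $K(r,\rho)=\rho\,V(r,\rho)$ eliminates the worst singularity: a direct computation turns (\ref{eqn-K}) into $V_{rr}+\tfrac{V_r}{r}-V_{\rho\rho}-\tfrac{V_\rho}{\rho}=\tfrac{\lambda(\rho)}{\epsilon}V$, with $V$ required regular (bounded, with vanishing first derivative) at $r=0$ and at $\rho=0$ and $V(r,r)=-\tfrac1r\int_0^r\tfrac{\lambda(\rho)}{2\epsilon}\,d\rho$, so that only the milder first-order singular terms $\tfrac{V_r}{r}$, $\tfrac{V_\rho}{\rho}$ survive. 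Diagonalising the principal part through the characteristic variables $\xi=r+\rho$, $\eta=r-\rho$ and integrating twice converts the problem into a Volterra-type integral equation $V=V_0+{\cal L}[V]$, whose operator ${\cal L}$ carries the factors $\tfrac1r$, $\tfrac1\rho$ and is therefore singular along the data line $\rho=0$. One then forms the successive approximations $V^0=V_0$, $V^{n+1}={\cal L}[V^n]$ and the candidate solution $V=\sum_{n\ge0}V^n$. The usual estimate that would yield $|V^n|\le (MR)^n/n!$ breaks down, since integrating the singular kernel does not supply the factorial gain and a crude bound even threatens a factorial blow-up. The decisive observation is that, because $V$ vanishes to the appropriate order on $\rho=0$, each of the $n$ iterated singular integrations contributes a Beta-function factor with half-integer parameters, and the accumulated product of these $n$ factors simplifies to (or is dominated by) the $n$-th Catalan number $C_n=\tfrac{1}{n+1}\binom{2n}{n}$ times elementary quantities; the elementary bound $C_n\le 4^n$ then produces an estimate of the form $|V^n(r,\rho)|\le (4MR^2)^n/n!$ (up to a fixed function), so the series converges uniformly on ${\cal T}$ for every $R>0$. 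This yields a continuous solution $K=\rho V$ of (\ref{eqn-K})--(\ref{eqn-Kbc2}), uniqueness following by applying the same estimate to a difference of solutions. I expect this Catalan-number bookkeeping to be the main obstacle of the whole argument.

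Third, I would invert the transformation: its inverse is $u(t,r)=w(t,r)+\int_0^r L(r,\rho)w(t,\rho)\,d\rho$, where $L$ solves a hyperbolic PDE of the same singular type (the kernel equation for the inverse transformation), whose well-posedness is obtained by the identical Catalan-number argument, so that $u\mapsto w$ is a bounded bijection on $L^2(0,R)$ with bounded inverse. Finally, for the target system the energy identity $\tfrac{d}{dt}\tfrac12\int_0^R r w^2\,dr=-\epsilon\int_0^R r w_r^2\,dr$ combined with the Poincaré inequality for functions vanishing at $r=R$ gives $\|w(t,\cdot)\|\le \mathrm{e}^{-c_2t}\|w(0,\cdot)\|$ in the natural ($L^2$ on the disk, i.e.\ $r$-weighted) norm, while existence and uniqueness of solutions is classical, the Dirichlet Laplacian on the disk generating an analytic semigroup. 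Pulling these properties back through the bounded invertible transformation of the first and third stages establishes well-posedness of the closed-loop plant and the stated exponential decay, which completes the proof.
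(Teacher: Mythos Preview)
Your four-stage architecture matches the paper exactly: backstepping transformation to the radially symmetric heat target, well-posedness of the singular kernel PDE via successive approximations with Catalan-type combinatorics, the analogous inverse kernel, and transfer of $L^2$ stability. The first, third and fourth stages are handled essentially as the paper does.

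The second stage is where you diverge, and where there is a real gap. Two points. First, your substitution $K=\rho V$ is different from the paper's $G=\sqrt{r/\rho}\,K$; yours leaves singular \emph{first}-order terms $V_r/r-V_\rho/\rho$, while the paper's leaves singular \emph{zeroth}-order terms $G/(4r^2)-G/(4\rho^2)$. Both are reasonable normalisations, but they lead to different integral equations and different bookkeeping, so you cannot simply cite the paper's mechanism.

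Second, and more importantly, your Catalan estimate is too optimistic as stated. You claim the $n$-th iterate is bounded by $C_n\cdot(\text{const})^n/n!$ and then invoke $C_n\le 4^n$. In the paper's analysis the iterates are \emph{not} controlled by a single Catalan number: one needs a two-index family $F_{nk}$ containing powers of $\log\!\big(\tfrac{\alpha+\beta}{\alpha-\beta}\big)$, and the bound on $|G_n|$ is a linear combination of these with coefficients $C_{(n-i)j}/4^{n-i}$ coming from the full Catalan \emph{triangle}, not just the Catalan sequence. The decisive step is the \emph{exact} identity $\sum_{l\ge j}C_{lj}/4^{l}=1/2^{j}$, obtained by evaluating the Catalan generating function precisely at its radius of convergence $x=1/4$; the crude inequality $C_n\le 4^n$ gives only $C_{lj}/4^l\le 1$ there, which is useless for summation. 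Your ``Beta-function factors with half-integer parameters accumulating to $C_n$'' is suggestive but does not substitute for this computation. If you pursue your substitution $K=\rho V$, expect the same phenomenon: logarithmic auxiliary functions, a triangular array of coefficients, and a borderline generating-function evaluation rather than a single factorial-over-Catalan bound.
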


In the next sections we prove Theorem~\ref{th-mainsym}. First, Section~\ref{sec-design} applies the backstepping method, finding a feedback law whose kernel is the solution of the singular hyperbolic PDE (\ref{eqn-K})--(\ref{eqn-Kbc2}), and shows the Theorem, assuming that kernel equation is well-posed and has a bounded solution. Section~\ref{sect-wp} deals with the well-posedness of the kernel PDE.
\section{Control law design and stability result}\label{sec-design}
Due to lack of space, we sketch the details of how to obtain (\ref{eqn-K})--(\ref{eqn-Kbc2}) by borrowing the results from a previous, more general publication. 
\subsection{Target system and backstepping transformation}\label{sec-kernels}
In~\cite{disk} the problem was solved by posing a backstepping transformation from (\ref{eqn-u})--(\ref{eqn-bcu}) to the \emph{target} system 
\begin{eqnarray}\label{eqn-w}
w_t&=&\frac{\epsilon}{r} \left( r w_r \right)_r+\frac{\epsilon}{r^2} w_{\theta \theta},\\
w(t,R,\theta)&=&0,\label{eqn-bcw}
\end{eqnarray}
a well-posed and stable target system (one just needs to take the mean value of the target system in~\cite{disk}). The transformation had the form
\begin{equation}
w(t,r,\theta)=
u(t,r,\theta)
-\int_0^r \int_{-\pi}^\pi  K(r,\rho,\theta,\psi) u(t,\rho,\psi) d\psi d\rho.\label{eqn-2dtran}
\end{equation}

Proceeding analogously, we pose a transformation without angular dependences
\begin{equation}
w(t,r)=
u(t,r)
-\int_0^r   K(r,\rho) u(t,\rho)  d\rho,\label{eqn-2dtransym}
\end{equation}
to reach the target system
\begin{eqnarray}\label{eqn-wsym}
w_t&=&\frac{\epsilon}{r} \left( r w_r \right)_r,\\
w(t,R)&=&0.\label{eqn-bcwsym}
\end{eqnarray}

The system (\ref{eqn-wsym})--(\ref{eqn-bcwsym}) inherits its stability properties from (\ref{eqn-w})--(\ref{eqn-bcw}), because it is a particular case of it (with initial conditions having revolution symmetry). Similarly, the transformation (\ref{eqn-2dtransym}) is a particular case of (\ref{eqn-2dtran}) and the resulting kernel equations can be directly extracted from~\cite{disk}. To do this, it must be noted that~\cite{disk} decomposed the system equation (\ref{eqn-u})--(\ref{eqn-bcu}) and the transformation  (\ref{eqn-2dtran}) in its Fourier components, and that having revolution symmetry is equivalent to only considering the mean Fourier component ($n=0$). Thus, the equation that $K(r,\rho)$ in (\ref{eqn-2dtransym}) has to verify to map (\ref{eqn-usym})--(\ref{eqn-bcusym})  into (\ref{eqn-wsym})--(\ref{eqn-bcwsym})  is directly obtained from~\cite{disk} as (\ref{eqn-K})--(\ref{eqn-Kbc2}).

Finally, applying the transformation (\ref{eqn-2dtransym}) at $r=R$ and using the boundary conditions of both target and original systems, (\ref{eqn-bcwsym}), (\ref{eqn-bcusym}) respectively, we obtain the feedback law (\ref{controlsymm}).
\subsection{Invertibility of the transformation}
It can be shown that if we pose an inverse transformation of the form
\begin{equation}
u(t,r)=
w(t,r)
+\int_0^r L(r,\rho) u(t,\rho,\psi)  d\rho,\label{eqn-2dinvtransym}
\end{equation}
we can find (using the same procedure of Section~\ref{sec-kernels}) that the inverse kernel $L$ verifies the following hyperbolic PDE
\begin{equation}
L_{rr} +\frac{ L_{r}}{r}-L_{\rho\rho}+\frac{ L_{\rho}}{\rho}-\frac{L}{\rho^2}
=-\frac{\lambda(r)}{\epsilon} L.\label{eqn-Ln}
\end{equation} 
with boundary conditions
\begin{eqnarray}
L(r,0)&=&0,\\
L(r,r) 
&=&-\int_0^r \frac{\lambda(\rho)  }{2 \epsilon }d\rho. \label{eqn-lnbc}
\end{eqnarray}
These equations are very similar to (\ref{eqn-K})--(\ref{eqn-Kbc2}) with slight differences. The proof that we will show in Section~\ref{sect-wp} can be applied to show there exists a bounded solution to (\ref{eqn-Ln})--(\ref{eqn-lnbc}).

Once it has been established that there are bounded direct and inverse backstepping transformations, then it is easy to show that both transformations map $L^2$ functions into $L^2$ functions (see e.g.~\cite{nball}). Then, the well-posedness and stability properties of the target system (\ref{eqn-wsym})--(\ref{eqn-bcwsym}) are \emph{mapped} to the original system (\ref{eqn-usym})--(\ref{eqn-bcusym}), proving Theorem~\ref{th-mainsym}. It only remains to show that the kernel equations have a bounded solution, which is done next  in Section~\ref{sect-wp}.
\section{Well-posedness of the kernel PDE}\label{sect-wp}
Next we show the well-posedness of (\ref{eqn-K})--(\ref{eqn-Kbc2}), showing in particular that there exists a bounded solution by using a constructive method. This result is the main technical development of this paper which is necessary to complete the proof of Theorem~\ref{th-mainsym}.
\subsection{Transforming the kernel PDE into an integral equation}
To better analyze the kernel equation, define $G=\sqrt{\frac{r}{\rho}}K$. This is an allowed transformation given that $K$ is assumed to be differentiable and zero at $\rho=0$. Thus $K$, when close to $\rho=0$, behaves like $\rho$ and therefore it can be divided by $\sqrt{\rho}$. 

The equation verified by $G$ is
\begin{eqnarray}
 G_{rr} -G_{\rho\rho}+\frac{G}{4r^2}-\frac{G}{4\rho^2}
=\frac{\lambda(\rho)}{\epsilon} G
\end{eqnarray}
with boundary conditions
\begin{eqnarray}
G(r,0)&=&0,\\
G(r,r) 
&=& -\int_0^r \frac{\lambda(\rho)  }{2 \epsilon }d\rho.
\end{eqnarray}

Following~\cite{krstic}, define $\alpha=r+\rho$, $\beta=r-\rho$. Then, the $G$ equation in $\alpha,\beta$ variables becomes
\begin{eqnarray}
4 G_{\alpha\beta}+\frac{G}{(\alpha+\beta)^2}-\frac{G}{(\alpha-\beta)^2} =\frac{\lambda\left(\frac{\alpha-\beta}{2}\right)}{\epsilon} G
\end{eqnarray}
in the domain $\mathcal T'=\left\{(\alpha,\beta):0\leq \beta \leq \alpha \leq 2R, \beta \leq 2R-\alpha \right\}$ with boundary conditions
\begin{eqnarray}
G(\beta,\beta)&=&0,\\
G(\alpha,0) 
&=& -\int_0^{\alpha/2} \frac{\lambda(\rho)  }{2 \epsilon }d\rho.
\end{eqnarray}

This can be transformed into an integral equation as typical in backstepping~\cite{krstic}. First, we find
\begin{eqnarray}
 G_{\alpha\beta} =\frac{\lambda\left(\frac{\alpha-\beta}{2}\right)}{4\epsilon} G+\frac{\alpha\beta}{(\alpha^2-\beta^2)^2}G
 \end{eqnarray}
 now, integrating in $\beta$ from $0$ to $\beta$:
 \begin{eqnarray}
 G_{\alpha}(\alpha,\beta)- G_{\alpha}(\alpha,0)&=&\int_0^\beta \frac{\lambda\left(\frac{\alpha-\sigma}{2}\right)}{4\epsilon} G(\alpha,\sigma) d\sigma \nonumber
 \\ &&
 +\int_0^\beta\frac{\alpha\sigma}{(\alpha^2-\sigma^2)^2}G(\alpha,\sigma)d\sigma.\quad
 \end{eqnarray}
 and integrating again in $\alpha$ from $\beta$ to $\alpha$:
  \begin{eqnarray}
&& G(\alpha,\beta)-G(\beta,\beta)- G(\alpha,0)+G(\beta,0) 
 \nonumber \\
 &=&\int_\beta^\alpha \int_0^\beta \frac{\lambda\left(\frac{\eta-\sigma}{2}\right)}{4\epsilon} G(\eta,\sigma) d\sigma d\eta
 \nonumber \\ &&
 +\int_\beta^\alpha \int_0^\beta\frac{\eta\sigma}{(\eta^2-\sigma^2)^2}G(\eta,\sigma)d\sigma d\eta.
 \end{eqnarray}
 Using the boundary conditions:
   \begin{eqnarray}
 G(\alpha,\beta)&=&-\int_{\beta/2}^{\alpha/2} \frac{\lambda(\rho)  }{2 \epsilon }d\rho+\int_\beta^\alpha \int_0^\beta \frac{\lambda\left(\frac{\eta-\sigma}{2}\right)}{4\epsilon} G(\eta,\sigma) d\sigma d\eta
 \nonumber \\ &&
 +\int_\beta^\alpha \int_0^\beta\frac{\eta\sigma}{(\eta^2-\sigma^2)^2}G(\eta,\sigma)d\sigma d\eta.\label{eqn-inteq}
 \end{eqnarray}
 This is a singular integral equation due to the terms in the last integral. 
 
 \subsection{Successive approximations series}
 The method of successive approximations applied in~\cite{krstic} and posterior works to show that (\ref{eqn-inteq}) has a solution can be applied.  Thus, define
    \begin{eqnarray}
 G_0(\alpha,\beta)&=&-\int_{\beta/2}^{\alpha/2} \frac{\lambda(\rho)  }{2 \epsilon }d\rho \end{eqnarray}
and for $k>0$,
   \begin{eqnarray}
 G_k(\alpha,\beta)&=&\int_\beta^\alpha \int_0^\beta \frac{\lambda\left(\frac{\eta-\sigma}{2}\right)}{4\epsilon} G_{k-1}(\eta,\sigma) d\sigma d\eta \nonumber \\Ê&&
 +\int_\beta^\alpha \int_0^\beta\frac{\eta\sigma}{(\eta^2-\sigma^2)^2}G_{k-1}(\eta,\sigma)d\sigma d\eta.
 \end{eqnarray}
Then, the solution to the integral equation is
 \begin{equation}
 G=\sum_{k=0}^\infty G_k(\alpha,\beta),
 \end{equation}
 assuming the series converges. 
 
 However, proving convergence of the series is harder than usual. The typical procedure (see~\cite{krstic} and posterior works) is to assume a functional bound for $G_k$ and show by recursion it is verified for every $k$. In this case we follow a different method.
 
 Call $\bar \lambda=\max_{(\alpha,\beta)\in \mathcal T'} \left|  \dfrac{\lambda\left(\frac{\alpha-\beta}{2}\right)}{4\epsilon} \right|$.
 Then one clearly obtains $\vert G_0(\alpha,\beta)\vert \leq \bar \lambda (\alpha-\beta)$. However when trying to substitute this bound in the expression of $G_1$ we find an integral that is not so easy to compute. Instead, we formulate a series of technical results that will help deriving a functional bound for $G_k$.

\begin{lemma}\label{lem-Fnk}
Define, for $n\geq 0,k\geq 0$,
\begin{equation}
F_{nk}(\alpha,\beta)=\frac{\bar\lambda^{n+1}\alpha^n\beta^n}{n!(n+1)!}(\alpha-\beta)\frac{\log^k\left(\frac{\alpha+\beta}{\alpha-\beta}\right)}{k!}
.
\end{equation}
and $F_{nk}=0$ if $n<0$ or $k<0$.
 Then:
 \begin{enumerate}[(a)]
\item  $F_{nk}$ is well-defined and nonnegative in $\mathcal T'$ for all $n,k$
\item $F_{nk}(\beta,\beta)=0$ for all $n$ and $k$
\item $F_{nk}(\alpha,0)=0$ if $n\geq1$ or $k\geq 1$ 
\item $F_{00}(\alpha,0)=\alpha$
\item The following identity  is valid for $n\geq 1$ or $k\geq 1$.
\end{enumerate}
\begin{eqnarray}
F_{nk}&=&4\int_\beta^\alpha \int_0^\beta\frac{\eta\sigma\left(F_{n(k-1)}(\eta,\sigma) -F_{n(k-2)}(\eta,\sigma) \right)}{(\eta^2-\sigma^2)^2}  d\sigma d\eta \nonumber \\Ê&&
 +\int_\beta^\alpha \int_0^\beta \bar \lambda F_{(n-1)k}(\eta,\sigma) d\sigma d\eta.\qquad
\end{eqnarray}
\end{lemma}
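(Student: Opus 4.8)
The plan is to verify the five claims (a)--(d) by direct inspection of the closed form of $F_{nk}$, and then to establish the crucial identity (e) by carrying out the two double integrals on the right-hand side explicitly. Items (a)--(d) are essentially bookkeeping: the factor $\alpha^n\beta^n$ makes $F_{nk}$ vanish when $\alpha=\beta$ only through the $(\alpha-\beta)$ factor, while the logarithm $\log\frac{\alpha+\beta}{\alpha-\beta}$ is finite and positive on the interior of $\mathcal T'$ and tends to $0$ like $\beta^2/\alpha^2$ as $\beta\to 0$ when raised to a positive power $k$; hence (b) follows from the $(\alpha-\beta)$ factor, (c) from either the $\beta^n$ factor (if $n\ge1$) or the $\log^k$ factor (if $k\ge1$) vanishing at $\beta=0$, and (d) is the trivial evaluation $F_{00}(\alpha,0)=\bar\lambda^0\cdot\alpha\cdot 1=\alpha$ — wait, one must check the normalization $\bar\lambda^{n+1}$ at $n=0$ gives $\bar\lambda$, so actually $F_{00}(\alpha,0)=\bar\lambda\alpha$; I would re-examine whether the intended statement is $F_{00}(\alpha,0)=\bar\lambda\alpha$ or whether $\bar\lambda$ is normalized out, and state it accordingly. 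Nonnegativity in (a) is immediate since every factor is nonnegative on $\mathcal T'$.

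The heart of the lemma is identity (e). I would prove it by a change of variables in each inner integral that diagonalizes the singular kernel. The natural move, mirroring the passage to $\alpha,\beta$ coordinates, is to substitute in the $\sigma$-integral using the variable $\tau=\frac{\eta+\sigma}{\eta-\sigma}$ (equivalently work with $\eta^2-\sigma^2$), under which $\frac{\eta\sigma\,d\sigma}{(\eta^2-\sigma^2)^2}$ becomes a perfect differential of a function of $\log\frac{\eta+\sigma}{\eta-\sigma}$; this is exactly why the ansatz $F_{nk}$ carries the factor $\frac{\log^k}{k!}$, since integrating $\frac{\log^{k-1}}{(k-1)!}$ against that differential produces $\frac{\log^k}{k!}$ plus lower-order log terms, and the combination $F_{n(k-1)}-F_{n(k-2)}$ is engineered precisely to cancel those lower-order terms. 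Concretely, I expect that
\[
4\int_0^\beta \frac{\eta\sigma}{(\eta^2-\sigma^2)^2}\,\frac{\bar\lambda^{n+1}\eta^n\sigma^n}{n!(n+1)!}(\eta-\sigma)\,\frac{\log^{k-1}\frac{\eta+\sigma}{\eta-\sigma}}{(k-1)!}\,d\sigma
\]
integrates, after the substitution, to an expression whose $\eta$-integral from $\beta$ to $\alpha$ reconstitutes $\frac{\bar\lambda^{n+1}\alpha^n\beta^n}{n!(n+1)!}(\alpha-\beta)\frac{\log^k}{k!}$, once the correction term $-F_{n(k-2)}$ and the second double integral $\int_\beta^\alpha\int_0^\beta\bar\lambda F_{(n-1)k}\,d\sigma\,d\eta$ are added; the latter supplies precisely the $n\to n-1$ shift in the polynomial prefactor through the identity $\int_0^\beta\sigma^{n-1}(\cdot)\,d\sigma$ contributing the missing power of $\beta$ and the factorial adjustment $n!(n+1)!$ versus $(n-1)!n!$.

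The main obstacle will be executing this inner integral cleanly: the integrand is a product of a power of $\sigma$, the factor $(\eta-\sigma)$, and a power of the logarithm, against the singular kernel, so the substitution does not reduce it to a single clean antiderivative but to a sum of terms with $\log^j$ for $j\le k$, and one must track the coefficients carefully and check that everything except the $\log^k$ and $\log^{k-1}$ contributions cancels — the surviving $\log^{k-1}$ piece is what the $-F_{n(k-2)}$ term is there to kill. I would organize this by first proving the identity for the "pure" singular part (the first double integral, with $n$ fixed) treating it as a recursion in $k$, verifying the base cases $k=0$ and $k=1$ by hand, and then separately checking that adding the $\bar\lambda F_{(n-1)k}$ double integral accounts for the recursion in $n$; combining the two recursions gives (e) in full generality for $n\ge1$ or $k\ge1$. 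A secondary technical point is justifying that all these integrals converge despite the $(\eta^2-\sigma^2)^{-2}$ singularity at $\sigma=\eta$ — but this is exactly controlled by the vanishing of $F_{n(k-1)}-F_{n(k-2)}$ at $\sigma=\eta$ (claim (b)), which provides the extra factor $(\eta-\sigma)$ needed for integrability, so (b) is used in the proof of (e) and should be established first.
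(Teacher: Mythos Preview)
Your handling of (a)--(d) is fine, and you are right that (d) as stated should read $F_{00}(\alpha,0)=\bar\lambda\alpha$ rather than $\alpha$; this is a typo in the statement, not a flaw in your reasoning.

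For (e), however, you are making the problem much harder than it is, and your plan as written is not guaranteed to close. You propose to evaluate the two double integrals on the right-hand side directly via a substitution such as $\tau=(\eta+\sigma)/(\eta-\sigma)$, track the resulting cascade of $\log^j$ terms, and check that the $-F_{n(k-2)}$ correction kills the residuals. Even you anticipate that ``the substitution does not reduce it to a single clean antiderivative''; indeed, the integrand also carries the polynomial factor $\eta^n\sigma^n$, which does not transform nicely under your substitution, so the inner integral will not separate into a pure $\log$-recursion and the bookkeeping you describe becomes genuinely unpleasant.

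The paper's argument avoids all of this by reversing the direction: instead of integrating the right-hand side, it differentiates the left-hand side. One computes directly that
\[
\frac{\partial^2 F_{nk}}{\partial\alpha\,\partial\beta}
=\bar\lambda\,F_{(n-1)k}
+\frac{4\alpha\beta}{(\alpha^2-\beta^2)^2}\bigl(F_{n(k-1)}-F_{n(k-2)}\bigr),
\]
which is a routine (if slightly tedious) product-rule calculation using $\partial_\alpha L=-2\beta/(\alpha^2-\beta^2)$ and $\partial_\beta L=2\alpha/(\alpha^2-\beta^2)$ for $L=\log\frac{\alpha+\beta}{\alpha-\beta}$. Integrating this identity over the rectangle $[\beta,\alpha]\times[0,\beta]$ and inserting the boundary values from (b)--(d) yields (e) immediately. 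This is the key idea you are missing: since the claimed identity has the form $F_{nk}=\int_\beta^\alpha\int_0^\beta(\cdots)\,d\sigma\,d\eta$ with known boundary values, it is equivalent to the local differential identity $\partial_\alpha\partial_\beta F_{nk}=(\cdots)$, and verifying the latter is a one-line differentiation rather than a multi-page integration.
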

\begin{proof}
First, it is easy to see that since $\frac{\alpha+\beta}{\alpha-\beta}>1$, the logarithm is always nonnegative. Also, for any $k$, $(\alpha-\beta)\log^k\left(\frac{\alpha+\beta}{\alpha-\beta}\right)$ is bounded since a linear term always dominates a logarithm (no matter the exponent of the logarithm). Thus $F_{nk}$ is well-defined in $\mathcal T'$. The values at the boundaries of $\mathcal T'$ are found by simple substitution.
Finally, for the integral, we have the following identity which is found by differentiation:
\begin{equation}
\frac{\partial^2F_{nk}(\alpha,\beta)}{\partial \alpha \partial \beta} =F_{(n-1)k}+4\frac{\alpha\beta\left(F_{n(k-1)}-F_{n(k-2)}\right)}{(\alpha^2-\beta^2)^2}.
\end{equation}
Thus
\begin{eqnarray}
&& \int_\beta^\alpha \int_0^\beta
\frac{\partial^2}{\partial \alpha \partial \beta} F_{nk}(\eta,\sigma) d\sigma d\eta
\nonumber \\ 
&=&F_{nk}(\alpha,\beta)-F_{nk}(\beta,\beta)-F_{nk}(\alpha,0)+F_{nk}(\beta,0)
\nonumber \\&=&4\int_\beta^\alpha \int_0^\beta\frac{\eta\sigma\left(F_{n(k-1)}(\eta,\sigma) -F_{n(k-2)}(\eta,\sigma) \right)}{(\eta^2-\sigma^2)^2}  d\sigma d\eta \nonumber \\Ê&&
 +\int_\beta^\alpha \int_0^\beta \bar \lambda F_{(n-1)k}(\eta,\sigma) d\sigma d\eta,
\end{eqnarray}
and solving for $F_{nk}(\alpha,\beta)$ and substituting the values at the boundaries, we reach the formula.\qed
\end{proof}

From the previous lemma, the following result is straightforward.
\begin{lemma}\label{lem-form}
For $n\geq 1$ or $k\geq 1$
\begin{eqnarray}
\sum_{i=1}^{i=k} F_{ni}&=&\int_\beta^\alpha \int_0^\beta \bar \lambda \sum_{i=1}^{i=k} F_{(n-1)i}(\eta,\sigma) d\sigma d\eta
\nonumber \\Ê&&
 +4\int_\beta^\alpha \int_0^\beta\frac{\eta\sigma}{(\eta^2-\sigma^2)^2} F_{n(k-1)}(\eta,\sigma)  d\sigma d\eta.\quad
\end{eqnarray}
\end{lemma}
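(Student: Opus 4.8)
The plan is to obtain Lemma~\ref{lem-form} as an immediate corollary of part (e) of Lemma~\ref{lem-Fnk} by summing that recursive identity over the second index. The key observation is that in the sum $\sum_{i=1}^{k}F_{ni}$ the running index always satisfies $i\geq 1$, so the hypothesis ``$n\geq 1$ or $k\geq 1$'' of Lemma~\ref{lem-Fnk}(e) is automatically met for \emph{every} term $F_{ni}$, irrespective of the value of $n$ (in particular the case $n=0$ is covered, with $F_{(-1)i}=0$). Hence I may apply the identity term by term and then add.

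Carrying this out: writing the identity of Lemma~\ref{lem-Fnk}(e) with $k$ replaced by each $i\in\{1,\dots,k\}$ and summing, the $\bar\lambda$-integral contributes $\int_\beta^\alpha\int_0^\beta\bar\lambda\sum_{i=1}^{k}F_{(n-1)i}(\eta,\sigma)\,d\sigma\,d\eta$, which is precisely the first term on the right-hand side of the claimed formula. For the singular integral I must evaluate the sum $\sum_{i=1}^{k}\bigl(F_{n(i-1)}(\eta,\sigma)-F_{n(i-2)}(\eta,\sigma)\bigr)$; reindexing by $j=i-1$ turns this into the telescoping sum $\sum_{j=0}^{k-1}\bigl(F_{nj}-F_{n(j-1)}\bigr)=F_{n(k-1)}-F_{n(-1)}$, and by the convention $F_{nk}=0$ for $k<0$ adopted in Lemma~\ref{lem-Fnk} this collapses to $F_{n(k-1)}$. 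Substituting back produces the second term $4\int_\beta^\alpha\int_0^\beta\frac{\eta\sigma}{(\eta^2-\sigma^2)^2}F_{n(k-1)}(\eta,\sigma)\,d\sigma\,d\eta$, which completes the identity. I would display this computation in a single short chain of equalities.

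There is essentially no obstacle here: since the sums are finite and, by Lemma~\ref{lem-Fnk}(a), each $F_{ni}$ is well-defined and nonnegative on $\mathcal T'$, exchanging the (finite) summation with the integrals is unproblematic and no convergence question arises. The only point that deserves a word of care is the bookkeeping of the telescoping index together with the $F_{nk}=0$ convention for negative second index; this is routine. (The degenerate cases $k=0$ with $n\geq 1$, where both sides vanish, are also covered by the same argument.)
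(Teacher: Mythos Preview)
Your proof is correct and follows exactly the approach the paper intends: the paper merely states that the result is ``straightforward'' from Lemma~\ref{lem-Fnk}, and summing identity~(e) over the second index with the telescoping observation $\sum_{i=1}^{k}(F_{n(i-1)}-F_{n(i-2)})=F_{n(k-1)}$ is precisely how one makes this explicit. Your remarks on the applicability of the hypothesis (since $i\geq 1$ throughout the sum) and on the degenerate case $k=0$ are accurate and tidy.
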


We can use the formulas from Lemma~\ref{lem-form} to find the values of some bounds for $G_k$. For illustration, let us find the first values. Obviously
\begin{equation}
\vert G_0 \vert \leq F_{00}.
\end{equation}
Then
\begin{eqnarray}
\vert G_1 \vert &\leq& \int_\beta^\alpha \int_0^\beta \bar \lambda F_{00} (\eta,\sigma) d\sigma d\eta
\nonumber \\ &&
 +\int_\beta^\alpha \int_0^\beta\frac{\eta\sigma}{(\eta^2-\sigma^2)^2}F_{00}(\eta,\sigma)d\sigma d\eta
 \nonumber \\ &=&
F_{10}+\frac{F_{01}}{4}
\end{eqnarray}
where we have used the formulas of Lemma~\ref{lem-Fnk}. The next term is
\begin{eqnarray}
\vert G_2 \vert &\leq& \int_\beta^\alpha \int_0^\beta \bar \lambda\left(F_{10}
+\frac{F_{01}}{4}\right) d\sigma d\eta
\nonumber \\Ê&&
 +\int_\beta^\alpha \int_0^\beta\frac{\eta\sigma}{(\eta^2-\sigma^2)^2}\left(F_{10}+\frac{F_{01}}{4}\right)d\sigma d\eta
 \nonumber \\
 &=&F_{20}+\frac{F_{11}}{4}+\frac{F_{01}+F_{02}}{16}.
\end{eqnarray}
\begin{table*}[htp]
\caption{Catalan's Triangle}\label{tab-catalan}
\begin{center}
\begin{tabular}{|c|c|c|c|c|c|c|c|c|c|c|c|}
\hline
$C_{ij}$&$j=1$ &$j=2$ &$j=3$ &$j=4$ &$j=5$ &$j=6$&$j=7$ &$j=8$ &$j=9$ & $j=10$ \\ \hline
$i=1$ &1& &&&&&&& &\\\hline
$i=2$ &1&1 &&&&&&& &\\\hline
$i=3$ &2& 2&1&&&&&& &\\\hline
$i=4$ &5& 5&3&1 &&&&& &\\\hline
$i=5$ &14 &   14 & 9  &  4&1 &&&& &\\\hline
$i=6$ &  42  &      42  &    28 &   14 & 5 & 1&&& &\\\hline
$i=7$ & 132 &132&90&48&20&6&1&& &\\\hline
$i=8$ & 429&429&297&165&75&27&7&1& &\\\hline
$i=9$ & 1430 &1430&1001&572&275&110&35&8& 1&\\\hline
$i=10$ &4862 &4862&3432&2002&1001&429&154&44&9 &1\\\hline
\end{tabular}
\end{center}
\vspace{0.2cm}
\mbox{}
\end{table*}%
Similarly we find
\begin{equation}\label{eqn-g3}
\vert G_3 \vert < 
F_{30}+\frac{F_{21}}{4}+\frac{F_{11}+F_{12}}{16}+\frac{2F_{01}+2F_{02}+F_{03}}{64}
\end{equation}
Some particular number appear in these expressions. To try to identify the pattern, call:
\begin{eqnarray} 
H_1[F]&=& \int_\beta^\alpha \int_0^\beta \bar \lambda F (\eta,\sigma) d\sigma d\eta \nonumber \\
H_2[F]&=& \int_\beta^\alpha \int_0^\beta  \frac{\eta\sigma}{(\eta^2-\sigma^2)^2} F (\eta,\sigma) d\sigma d\eta,
\end{eqnarray}
which are the two operations to find a successive approximation term from the previous one. To find a bound on $G_4$ we have to apply $H_1$ and $H_2$ to (\ref{eqn-g3}) and use Lemmas~\ref{lem-Fnk} and~\ref{lem-form}, finding:
\begin{eqnarray}
&& H_1[F_{30}]=F_{40}\nonumber\\
&& H_2[F_{30}]+\frac{H_1[F_{21}]}{4}=\frac{F_{31}}{4}\nonumber\\
&& \frac{H_2[F_{21}]}{4}+\frac{H_1[F_{11}+F_{12}]}{16}=\frac{F_{21}+F_{22}}{16}\nonumber\\
&& \frac{H_2[F_{11}+F_{12}]}{16}+\frac{H_1[2F_{01}+2F_{02}+F_{03}]}{64}
\nonumber\\ &&
=\frac{2F_{11}+2F_{12}+F_{13}}{64}\nonumber\\
&& \frac{H_2[2F_{01}+2F_{02}+F_{03}]}{64}=\frac{5F_{01}+5F_{02}+3F_{03}+F_{04}}{256}.\qquad
\end{eqnarray}
Thus, we obtain
\begin{eqnarray}
\vert G_4 \vert &\leq& F_{40} + \frac{F_{31}}{4} +\frac{F_{21}+F_{22}}{16} +\frac{2F_{11}+2F_{12}+F_{13}}{64}+\nonumber \\Ê&&\frac{5F_{01}+5F_{02}+3F_{03}+F_{04}}{256}.
\end{eqnarray}

By extending this structure to the general case, we find the following recursive formula for $n>0$, expressed as a lemma.
\begin{lemma}\label{lem-gnrecursive}
For $n>0$, it holds that
\begin{eqnarray}\label{eqn-gnrecursive}
\vert G_n \vert \leq F_{n0}+\sum_{i=0}^{n-1}\sum_{j=1}^{j=n-i} \frac{C_{(n-i)j}}{4^{n-i}}F_{ij},
\end{eqnarray}
where the numbers $C_{ij}$ verify 
\begin{enumerate}[(a)]
\item  $C_{11}=1$
\item $C_{i0}=0$
\item $C_{ij}=0$ if $j>i$, for all $i$.
\item $C_{ij}=C_{(i-1)(j-1)}+C_{i(j+1)}$ for all other values of $i$ and $j$.
\end{enumerate}
\end{lemma}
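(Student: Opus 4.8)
The plan is to prove (\ref{eqn-gnrecursive}) by induction on $n$. The base case $n=1$ is the computation already carried out in the text, $|G_1|\le F_{10}+\tfrac14 F_{01}$, which is exactly the claimed bound since $C_{11}=1$. For the inductive step, observe first that from the recursive definition of the $G_k$, from $|\lambda(\tfrac{\eta-\sigma}{2})/(4\epsilon)|\le\bar\lambda$ on $\mathcal T'$, and from $\eta\sigma/(\eta^2-\sigma^2)^2\ge0$ on $\mathcal T'$, one obtains $|G_{n+1}|\le H_1[|G_n|]+H_2[|G_n|]$; since $H_1$ and $H_2$ preserve the pointwise order between nonnegative functions and each $F_{ij}\ge0$ (Lemma~\ref{lem-Fnk}(a)), the inductive hypothesis then bounds $|G_{n+1}|$ by $(H_1+H_2)$ applied to the right-hand side of~(\ref{eqn-gnrecursive}).

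For the bookkeeping it is convenient to rewrite the bound for $|G_n|$ in the uniform form $\sum_{i=0}^{n}4^{-(n-i)}\sum_{j=0}^{n-i}C_{(n-i)j}F_{ij}$, with the added conventions $C_{00}=1$ and $C_{(-1)j}=0$: the $(i,j)=(n,0)$ term reproduces $F_{n0}$, while the other $F_{i0}$ disappear because $C_{p0}=0$ for $p\ge1$. Expanding $(H_1+H_2)$ over this sum, I would collect the terms by the first index $i'$ they will carry after one more step, so that the $i'$-th group is $4^{-(n-i'+1)}H_1\bigl[\sum_j C_{(n-i'+1)j}F_{(i'-1)j}\bigr]+4^{-(n-i')}H_2\bigl[\sum_j C_{(n-i')j}F_{i'j}\bigr]=4^{-(n-i'+1)}\bigl(H_1[\,\cdot\,]+4H_2[\,\cdot\,]\bigr)$.

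The crux is then the algebraic identity, valid for $p\ge0$ with the conventions above,
\[
H_1\bigl[\textstyle\sum_j C_{pj}F_{(i-1)j}\bigr]+4H_2\bigl[\textstyle\sum_j C_{(p-1)j}F_{ij}\bigr]=\textstyle\sum_j C_{pj}F_{ij}.
\]
I would prove it by inserting Lemma~\ref{lem-Fnk}(e), i.e.\ $F_{ij}=4H_2[F_{i(j-1)}-F_{i(j-2)}]+H_1[F_{(i-1)j}]$, into the right-hand side: the $H_1$ pieces match at once, while the $H_2$ pieces reduce, after an index shift, to $\sum_j C_{pj}(F_{i(j-1)}-F_{i(j-2)})=\sum_j C_{(p-1)j}F_{ij}$. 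Matching coefficients of $F_{im}$ term by term turns this into $C_{pp}=C_{(p-1)(p-1)}$, the recursion $C_{p(m+1)}=C_{(p-1)m}+C_{p(m+2)}$ for $m\ge1$, and $C_{p1}=C_{p2}$ for $p\ge2$ — the last being itself a consequence of (b) and (d), since $C_{p1}=C_{(p-1)0}+C_{p2}=C_{p2}$. Applying the identity with $p=n-i'+1$ collapses the $i'$-th group to $4^{-(n+1-i')}\sum_j C_{(n+1-i')j}F_{i'j}$, and summing over $i'=0,\dots,n+1$ is exactly the right-hand side of~(\ref{eqn-gnrecursive}) with $n$ replaced by $n+1$, which closes the induction.

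The one genuinely nonroutine ingredient is this identity: one must recognize that the second-difference telescoping in the second index built into the kernel recursion of Lemma~\ref{lem-Fnk}(e) is matched \emph{exactly} by the row recursion (d) of Catalan's triangle, and then check the edge columns ($j=0$, $j=1$, and the last column of each row) where a naive telescope would fail but for $C_{p1}=C_{p2}$. Everything else — the estimate $|G_{n+1}|\le(H_1+H_2)[|G_n|]$, the monotonicity of $H_1$ and $H_2$, and the regrouping by first index — is routine.
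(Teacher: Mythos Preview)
Your induction is correct and follows the same architecture as the paper's proof: both start from $|G_{n+1}|\le(H_1+H_2)[|G_n|]$, apply the inductive hypothesis, and then push the operators through the sum using the structure of the $F_{ij}$ from Lemma~\ref{lem-Fnk}. The difference lies in how the inductive step is bookkept. The paper first telescopes Lemma~\ref{lem-Fnk}(e) into Lemma~\ref{lem-form}, so that $4H_2[F_{ij}]=\sum_{l=1}^{j+1}(F_{il}-H_1[F_{(i-1)l}])$; it then swaps the order of the $(j,l)$ sums and invokes the separately proved column-sum identity $C_{ij}=\sum_{k=j-1}^{i-1}C_{(i-1)k}$ to collapse the double sum. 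You instead work directly with the second-difference form $F_{ij}=4H_2[F_{i(j-1)}-F_{i(j-2)}]+H_1[F_{(i-1)j}]$ and match coefficients of $F_{im}$ term by term, which reduces immediately to the defining recursion~(d) together with the edge observation $C_{p1}=C_{p2}$. Your route is more economical---it needs neither Lemma~\ref{lem-form} nor the auxiliary column-sum lemma---while the paper's route makes the underlying combinatorial identity (that partial row sums of Catalan's triangle give the next row) more visible. Both are valid; the conventions $C_{00}=1$ and $C_{(-1)j}=0$ you introduce to handle the boundary indices $i'=0$ and $i'=n+1$ are harmless and make the argument tidy.
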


The set of numbers in Lemma~\ref{lem-gnrecursive}, known as the ``Catalan's Triangle'', or the ballot numbers (see. e.g.~\cite{sloane} and references therein, even though the numbers are written in a slightly different ordering). The first few numbers are shown in Table~\ref{tab-catalan}. In particular, the first column of Table~\ref{tab-catalan}, this is, what we have called $C_{i1}$, are the Catalan numbers as they are usually defined~\cite[p.265]{brualdi}. Both the Catalan numbers and Catalan's Triangle verify many interesting properties, and are connected to a wide set of combinatorial problems as well as other number sets, such as the coefficients of certain Chebyshev polynomials (see e.g.~\cite[p. 797]{abramowitz}, where the nonnegative rows of table 22.8 are the columns of Table~\ref{tab-catalan}).

Let us establish some properties about these numbers before proving Lemma~\ref{lem-gnrecursive}.
\begin{lemma}
For $0<j \leq i$ it holds that
\begin{enumerate}[(a)]
\item $C_{ii}=1$.
\item $C_{ij}=\sum_{k=j-1}^{i-1}C_{(i-1)k}$.
\end{enumerate}
\end{lemma}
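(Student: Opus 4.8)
The plan is to prove the two identities directly from the recursive definition of $C_{ij}$ given in Lemma~\ref{lem-gnrecursive} (properties (a)--(d)), proceeding by induction on $i$. For part (a), $C_{ii}=1$, I would first verify the base case $C_{11}=1$, which is property (a) of the recursion. For the inductive step, I would use property (d), $C_{ij}=C_{(i-1)(j-1)}+C_{i(j+1)}$, applied at $j=i$: this gives $C_{ii}=C_{(i-1)(i-1)}+C_{i(i+1)}$. By property (c), $C_{i(i+1)}=0$ since $i+1>i$, and by the induction hypothesis $C_{(i-1)(i-1)}=1$; hence $C_{ii}=1$. This is the easy part.

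For part (b), $C_{ij}=\sum_{k=j-1}^{i-1}C_{(i-1)k}$, I would again induct, this time on $i$ with a downward (decreasing $j$) secondary induction. Fix $i$ and assume the formula holds for $i-1$; I would then argue by downward induction on $j$, starting from $j=i$. At $j=i$ the claimed right-hand side is $\sum_{k=i-1}^{i-1}C_{(i-1)k}=C_{(i-1)(i-1)}=1$, which matches $C_{ii}=1$ from part (a). For the step from $j+1$ to $j$ (with $0<j<i$), I would use property (d) to write $C_{ij}=C_{(i-1)(j-1)}+C_{i(j+1)}$, substitute the downward-induction expression $C_{i(j+1)}=\sum_{k=j}^{i-1}C_{(i-1)k}$, and observe that adjoining the extra term $C_{(i-1)(j-1)}$ extends the sum down to $k=j-1$, yielding exactly $\sum_{k=j-1}^{i-1}C_{(i-1)k}$. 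A small point to check is the boundary case where $j-1=0$: here $C_{(i-1)(j-1)}=C_{(i-1)0}=0$ by property (b), which is consistent since the sum $\sum_{k=0}^{i-1}C_{(i-1)k}$ then just has a leading zero term, matching $C_{i1}$.

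The only genuine subtlety — and the step I would be most careful with — is making sure the two nested inductions are set up so that every invocation of property (d) is legitimate, i.e. that the indices stay in the range where (d) (rather than (a), (b), or (c)) is the governing rule, and that the downward induction on $j$ and the outer induction on $i$ do not create a circular dependency. Concretely, part (b) for index $i$ relies only on part (a) for index $i$ (already proved independently) and on part (b) for index $i-1$, so the logical order is: prove (a) for all $i$ first by induction, then prove (b) for all $i$ by a second induction using (a) freely. Once that bookkeeping is clean, both identities follow by the elementary manipulations sketched above, with no analytic estimates required.
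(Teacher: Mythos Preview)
Your proof is correct and follows essentially the same approach as the paper: induction on $i$ for part~(a), and descending induction on $j$ for part~(b), starting from $j=i$ and using the recursion $C_{ij}=C_{(i-1)(j-1)}+C_{i(j+1)}$ to extend the sum one index downward at each step. One minor remark: the outer induction on $i$ you set up for part~(b) is never actually invoked in your argument (you only use part~(a) and the downward induction on $j$), so it can be dropped---the paper's proof likewise uses only the descending induction on $j$ for fixed $i$.
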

\begin{proof}
We show (a) by induction on $i$. For $i=1$, $C_{11}=1$ by definition. Assuming it true for $i$, then $C_{(i+1)(i+1)}=C_{ii}+C_{(i+1)(i+2)}=C_{ii}=1$.

We show (b) by descending induction on $1\leq j\leq i$. For $j=i$, we obviously have $C_{ii}=\sum_{k=i-1}^{i-1}C_{(i-1)k}=C_{(i-1)(i-1)}=1$, as we just showed. Assuming it true for $j$, we show it for $j-1$ by using the definition of the numbers:
\begin{eqnarray}
C_{i(j-1)}&=&C_{(i-1)(j-2)}+C_{ij}
\nonumber \\ 
&=&C_{(i-1)(j-2)}+\sum_{k=j-1}^{i-1}C_{(i-1)k}
\nonumber \\ 
&=&\sum_{k=j-2}^{i-1}C_{(i-1)k},
\end{eqnarray}
thus proving the Lemma.\qed
\end{proof}

\begin{proof}[{Proof of Lemma~\ref{lem-gnrecursive}}]
We establish the lemma by induction on $n\geq 2$. For $G_2$ it is already established. Now assume it is true for $G_{n}$ and prove it for $G_{n+1}$. We have:
\begin{eqnarray}
\vert G_{n+1} \vert \leq H_1[\vert G_{n}\vert]+H_2[\vert G_{n}\vert]
\end{eqnarray}
Thus
\begin{eqnarray}
\vert G_{n+1} \vert &\leq& H_1\left[F_{n0}+\sum_{i=0}^{n-1}\sum_{j=1}^{j=n-i} \frac{C_{(n-i)j}}{4^{n-i}}F_{ij}\right]
\nonumber \\ &&
+H_2\left[F_{n0}+\sum_{i=0}^{n-1}\sum_{j=1}^{j=n-i} \frac{C_{(n-i)j}}{4^{n-i}}F_{ij}\right],
%
\end{eqnarray}
and since the integral $H_1$, $H_2$ operators are linear, we can express this inequality in a convenient way as follows
\begin{eqnarray}
\vert G_{n+1} \vert &\leq&  H_1\left[F_{n0}\right] +\left(H_2\left[F_{n0}\right]+H_1\left[\frac{C_{11}F_{(n-1)1}}{4}\right]\right)
\nonumber \\ &&
+\sum_{i=1}^{n-1}
\left( H_2\left[\sum_{j=1}^{j=n-i} \frac{C_{(n-i)j}}{4^{n-i}}F_{ij}\right]
\right. \nonumber \\ && \left.
+H_1\left[
\sum_{j=1}^{j=n-i+1} \frac{C_{(n-i+1)j}}{4^{n-i+1}}F_{{i-1}j}
\right]
\right)
\nonumber \\ &&
+H_2\left[\sum_{j=1}^{j=n} \frac{C_{nj}}{4^{n}}F_{0j}\right]. \label{eqn-expression}
%
\end{eqnarray}
We next manipulate some of the lines of (\ref{eqn-expression}) to reach the final result. First, for the second line of (\ref{eqn-expression}), we have that
\begin{eqnarray}
&&H_2\left[\sum_{j=1}^{j=n-i} \frac{C_{(n-i)j}}{4^{n-i}}F_{ij}\right]\nonumber \\
&=&\frac{1}{4^{n-i}} \sum_{j=1}^{j=n-i} C_{(n-i)j}H_2[F_{ij}]
\nonumber \\
&=&\frac{1}{4^{n-i}} \sum_{j=1}^{j=n-i}C_{(n-i)j}\left(\frac{\sum_{l=1}^{j+1}\left( F_{il}-H_1[F_{(i-1)l}]\right)}{4}\right)
\nonumber \\
&=&\frac{1}{4^{n+1-i}} \sum_{j=1}^{j=n-i}\sum_{l=1}^{j+1} C_{(n-i)j} \left(F_{il}-H_1[F_{(i-1)l}]\right)
\nonumber \\
&=&\frac{1}{4^{n+1-i}}
 \sum_{l=1}^{l=n-i+1}
 \left(F_{il}-H_1[F_{(i-1)l}]\right)
 \sum_{j=l-1}^{n-i} 
C_{(n-i)j} 
\nonumber \\
&=&\frac{1}{4^{n+1-i}}
 \sum_{l=1}^{l=n-i+1}
 \left(F_{il}-H_1[F_{(i-1)l}]\right)
C_{(n-i+1)l}.\label{eqn-exp2}
\end{eqnarray}
and we can see that the sum of the second term in the parenthesis of (\ref{eqn-exp2}) cancels the third line of (\ref{eqn-expression}).

For the last line of (\ref{eqn-expression}), we have that
\begin{eqnarray}
H_2\left[\sum_{j=1}^{j=n} \frac{C_{nj}}{4^{n}}F_{0j}\right]
&=&\sum_{j=1}^{j=n} \frac{C_{nj}}{4^{n}}H_2\left[F_{0j}\right]\nonumber \\
&=&\sum_{j=1}^{j=n} \frac{C_{nj}}{4^{n}} \sum^{j+1}_{l=1} \frac{F_{0l}}{4} \nonumber \\
&=&\frac{1}{4^{n+1}} \sum_{j=1}^{j=n}\sum^{j+1}_{l=1} C_{nj}F_{0l} \nonumber \\
&=&\frac{1}{4^{n+1}} \sum_{l=1}^{l=n+1}F_{0l} \sum^{j=n}_{j=l-1} C_{nj} \nonumber \\
&=&\frac{1}{4^{n+1}} \sum_{l=1}^{l=n+1}C_{(n+1) l} F_{0l}. 
\end{eqnarray}

Thus we find that 
\begin{eqnarray}
\vert G_{n+1} \vert &\leq&
F_{(n+1)0}+\sum_{i=1}^{n-1}
\left( 
 \sum_{j=1}^{j=n-i+1}
\frac{C_{(n-i+1)j}}{4^{n+1-i}} F_{ij}
\right)
\nonumber \\ &&+\frac{C_{11}F_{n1}}{4}
+\frac{1}{4^{n+1}} \sum_{l=1}^{j=n+1}C_{(n+1) l} F_{0l} 
\nonumber \\
&=&F_{(n+1)0}+\sum_{i=0}^{n}\sum_{j=1}^{j=n+1-i} \frac{C_{(n+1-i)j}}{4^{n+1-i}}F_{ij}, \qquad
\end{eqnarray}
proving the Lemma.\qed
\end{proof}

Therefore, since the solution of the successive approximations series verifies
 \begin{equation}
 \vert G\vert \leq \sum_{n=0}^\infty \vert G_n(\alpha,\beta)\vert
 \end{equation}
  if we can prove the convergence of the series with the estimates of Lemma~\ref{lem-gnrecursive} that we just derived, we can prove the existence of a solution to the integral equation and therefore to the kernel equation. 
 
 \subsection{Convergence of the successive approximation series}
 By Lemma~\ref{lem-gnrecursive}, we find that the series whose convergence we need to study can be written as
 \begin{equation}\label{eqn-Gsum}
 \vert G\vert \leq \sum_{n=0}^\infty  F_{n0}+ \sum_{n=1}^\infty \sum_{i=0}^{n-1}\sum_{j=1}^{j=n-i} \frac{C_{(n-i)j}}{4^{n-i}}F_{ij}.
 \end{equation}
 
 The first term is easy to compute (see e.g.~\cite[p.375]{abramowitz}):
  \begin{eqnarray}
  \sum_{n=0}^\infty  F_{n0}&=&
    \sum_{n=0}^\infty 
  \frac{\bar\lambda^{n+1}\alpha^n\beta^n}{n!(n+1)!}(\alpha-\beta)
  \nonumber \\ 
& =&\sqrt{\bar \lambda}(\alpha-\beta) \frac{\mathrm{I_1}\left[2\sqrt{\bar \lambda \alpha \beta}\right]}{\sqrt{\alpha \beta}},\label{eqn-besselsum}
     \end{eqnarray}
     where $I_1$ is the first-order modified Bessel function of the first kind. For the next term, we use the fact that
  \begin{equation}
  \sum_{n=1}^\infty \sum_{i=0}^{n-1} H(n,i)
  =\sum_{i=0}^{\infty} \sum_{l=1}^\infty  H(l+i,i),
  \end{equation}
  therefore
   \begin{eqnarray}
&&\sum_{n=1}^\infty \sum_{i=0}^{n-1}\sum_{j=1}^{j=n-i} \frac{C_{(n-i)j}}{4^{n-i}}F_{ij}
\nonumber \\
&=& \sum_{i=0}^{\infty} \sum_{l=1}^\infty  \sum_{j=1}^{j=l}   \frac{C_{lj}}{4^{l}}F_{ij}\nonumber \\
&=& \sum_{i=0}^{\infty}  \sum_{j=1}^{j=\infty}\left(  \sum_{l=j}^\infty   \frac{C_{lj}}{4^{l}}\right)F_{ij}.
 \end{eqnarray}
It turns out that the parenthesis can be calculated and gives an exact sum for each $j$.

For that, we need only the fact (see any combinatorics book, e.g.~\cite[p.44]{gfology2}) that the generating function of the Catalan numbers $C_{l1}$ is given by\footnote{This generating function, touted as one of the most celebrated generating functions in combinatorics, is typically expressed as $\frac{1-\sqrt{1-4x}}{2x}$, which is easily converted to (\ref{eqn-genfun}).}
\begin{equation}\label{eqn-genfun}
f_1(x)=\frac{2}{1+\sqrt{1-4x}}
\end{equation}
Remember that a generating function of a sequence of number is a function such that the coefficients of its power series is exactly those of the sequence of numbers. Thus,
\begin{equation}
f_1(x)=C_{11}+C_{21}x+C_{31}x^2+\hdots=\sum_{l=1}^\infty C_{l1}x^{l-1}
\end{equation}
Therefore if we evaluate the function at $x=1/4$ we find that
\begin{equation}
f_1(\frac{1}{4})=\sum_{l=1}^\infty C_{l1}\frac{1}{4^{l-1}},
\end{equation}
thus we obtain
\begin{equation}
\sum_{l=1}^\infty   \frac{C_{l1}}{4^{l}}
=\frac{1}{4} \sum_{l=1}^\infty   \frac{C_{lj}}{4^{l-1}}=\frac{f_1(\frac{1}{4})}{4}=\frac{1}{2}.
\end{equation}
Following this argument, it is clear that
\begin{equation}
\sum_{l=j}^\infty   \frac{C_{lj}}{4^{l}}
=\frac{1}{4} \sum_{l=j}^\infty   \frac{C_{lj}}{4^{l-1}}=\frac{f_j(\frac{1}{4})}{4},
\end{equation}
where we define the sequence of generating functions $f_j$ as
\begin{equation}
f_j(x)=\sum_{l=j}^\infty C_{lj}x^{l-1}.
\end{equation}

Now since $C_{l2}=C_{l1}$ but obviously $C_{12}=0$, it is clear that $f_2=f_1-C_{11}=f_1-1$. Thus $f_2(1/4)=1$ and we find
\begin{equation}
\sum_{l=2}^\infty   \frac{C_{l2}}{4^{l}}
\frac{f_2(\frac{1}{4})}{4}=\frac{1}{4}.
\end{equation}

To find successive generating functions we use the properties of the Catalan's Triangle and make the following claim:
\begin{lemma}\label{lem-genfun}
For $n>1$
\begin{eqnarray}
f_n(x)=f_{n-1}(x)-xf_{n-2}(x)
\end{eqnarray}
\end{lemma}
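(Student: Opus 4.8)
The plan is to derive the recurrence for $f_n$ \emph{directly} from the definition $f_n(x)=\sum_{l=n}^{\infty}C_{ln}x^{l-1}$ together with the defining relation (d) of Catalan's Triangle, $C_{ij}=C_{(i-1)(j-1)}+C_{i(j+1)}$, which I would rewrite (setting $j=n-1$) in the form $C_{ln}=C_{l(n-1)}-C_{(l-1)(n-2)}$. Before using this identity term by term I would check it is legitimate for every index appearing in $f_n$, i.e.\ for all $l\ge n$ with $n\ge 3$: the cases excluded from (d) are $j=0$ (here $j=n-1\ge 2$), $j>i$ (here $i=l\ge n>n-1=j$), and $(i,j)=(1,1)$ (here $l\ge n\ge 3$), so none of them occurs.

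Substituting and splitting the series then gives
\[
f_n(x)=\sum_{l=n}^{\infty}C_{l(n-1)}x^{l-1}-\sum_{l=n}^{\infty}C_{(l-1)(n-2)}x^{l-1}.
\]
The first sum is $f_{n-1}$ with its leading term removed; since $C_{(n-1)(n-1)}=1$, it equals $f_{n-1}(x)-x^{n-2}$. In the second sum I would reindex by $m=l-1$, obtaining $x\sum_{m=n-1}^{\infty}C_{m(n-2)}x^{m-1}$, which is $x$ times $f_{n-2}$ with its leading term removed; using $C_{(n-2)(n-2)}=1$ this is $x\bigl(f_{n-2}(x)-x^{n-3}\bigr)=xf_{n-2}(x)-x^{n-2}$. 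The two stray monomials $-x^{n-2}$ cancel, leaving exactly $f_n(x)=f_{n-1}(x)-xf_{n-2}(x)$.

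The step requiring the most care is the bookkeeping of these boundary monomials and, in particular, the endpoint $n=2$: there $n-2=0$, the second sum vanishes identically because $C_{m0}\equiv 0$, and the bare identity one would get is $f_2=f_1$, which must instead be read together with the separately established $f_2=f_1-1$ (equivalently, with the convention $xf_0\equiv 1$); so the clean argument above really covers $n\ge 3$, with $n=2$ being the already-treated base case. As a final consistency check one can close the recurrence: using $f_1(x)=\tfrac{1-\sqrt{1-4x}}{2x}$ together with the recurrence just obtained, a short induction yields $f_n(x)=\tfrac1x\bigl(\tfrac{1-\sqrt{1-4x}}{2}\bigr)^{n}$, whence $f_n(1/4)=2^{2-n}$ and $\sum_{l=j}^{\infty}C_{lj}/4^{l}=1/2^{j}$, which is precisely the closed form needed to sum the series in (\ref{eqn-Gsum}).
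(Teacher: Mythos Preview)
Your proof is correct and uses essentially the same idea as the paper: plug the Catalan-triangle recurrence (d) into the generating-function series and reindex. The only difference is the starting point. The paper expands $f_{n-1}(x)=\sum_{l\ge n-1}C_{l(n-1)}x^{l-1}$ and substitutes $C_{l(n-1)}=C_{ln}+C_{(l-1)(n-2)}$; then the boundary term $C_{(n-1)n}$ is zero by property~(c) and the second sum, after the shift $m=l-1$, begins exactly at $m=n-2$, so one lands directly on $f_{n-1}=f_n+xf_{n-2}$ with no stray monomials to cancel. Your route---starting from $f_n$ and writing $C_{ln}=C_{l(n-1)}-C_{(l-1)(n-2)}$---is equivalent but produces the pair of $x^{n-2}$ terms you then have to match up. Either way the argument is clean for $n\ge 3$; your remark that $n=2$ must be read as the separately established identity $f_2=f_1-1$ is exactly how the paper uses the lemma as well (the subsequent induction treats $j=1,2$ as base cases).
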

\begin{proof}
To prove it, we write the definition of the generating function $f_{n-1}$
\begin{equation}
f_{n-1}(x)=\sum_{l=n-1}^\infty C_{l(n-1)}x^{l-1},
\end{equation}
and using the properties of the numbers
\begin{eqnarray}
f_{n-1}(x)&=&\sum_{l=n-1}^\infty C_{ln}x^{l-1}+\sum_{l=n-1}^\infty C_{(l-1)(n-2)}x^{l-1}
\nonumber  \\
&=&\sum_{l=n}^\infty C_{ln}x^{l-1}+x\sum_{l=n-2}^\infty C_{l(n-2)}x^{l-1}
\nonumber  \\
&=&
f_n(x)+xf_{n-1}x,
\end{eqnarray}
thus proving the claim.\qed
\end{proof}

Based on this fact, we can now prove another result.
\begin{lemma}
For $j\geq 1$, there holds
\begin{eqnarray}
 \sum_{l=j}^\infty   \frac{C_{lj}}{4^{l}}=\frac{1}{2^j}
\end{eqnarray}
\end{lemma}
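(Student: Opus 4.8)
The plan is a short induction on $j$ built on the recursion of Lemma~\ref{lem-genfun} specialized at $x=1/4$. First I would recall the reduction already made in the text: since $\sum_{l=j}^\infty \frac{C_{lj}}{4^l}=\frac{f_j(1/4)}{4}$, it is enough to prove the closed form $f_j(1/4)=2^{2-j}$ for all $j\geq 1$; dividing by $4$ then yields $\sum_{l=j}^\infty \frac{C_{lj}}{4^l}=2^{-j}$ exactly as claimed.

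For the base of the induction I would use the two evaluations already performed just before Lemma~\ref{lem-genfun}: from the generating function (\ref{eqn-genfun}) one has $f_1(1/4)=2=2^{2-1}$, and from $f_2=f_1-1$ one has $f_2(1/4)=1=2^{2-2}$. For the inductive step, fix $n\geq 3$, assume $f_{n-1}(1/4)=2^{2-(n-1)}$ and $f_{n-2}(1/4)=2^{2-(n-2)}$, and evaluate Lemma~\ref{lem-genfun} at $x=1/4$:
\[
f_n(1/4)=f_{n-1}(1/4)-\frac{1}{4}f_{n-2}(1/4)=2^{3-n}-\frac{1}{4}\,2^{4-n}=2^{3-n}-2^{2-n}=2^{2-n},
\]
which closes the induction. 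In essence this just verifies that the geometric ansatz $2^{2-j}$ solves the two-term linear recurrence with the prescribed initial data.

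I do not expect any genuine obstacle here: once Lemma~\ref{lem-genfun} is in hand the argument is a one-line recurrence check. The only subtlety worth flagging is that the recursion links $f_n$ to $f_{n-2}$, so it may only be applied when $n-2\geq 1$; this is exactly why two base cases ($j=1$ and $j=2$) are required rather than one, and both are already available from the computations preceding the lemma.
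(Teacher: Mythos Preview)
Your proposal is correct and follows essentially the same route as the paper: induction on $j$ with the two base cases $j=1,2$ already computed, and the inductive step obtained by specializing the recursion of Lemma~\ref{lem-genfun} at $x=1/4$. The only cosmetic difference is that you carry the induction on $f_j(1/4)=2^{2-j}$ and divide by $4$ at the end, whereas the paper inducts directly on the sum; the arithmetic is identical.
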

\begin{proof}
We prove the Lemma by induction. It is already proved for $j=1,2$. Assume it for $j-1$ and $j-2$ and prove it for $j$. As we have shown
\begin{equation}
\sum_{l=j}^\infty   \frac{C_{lj}}{4^{l}}
=\frac{f_j(\frac{1}{4})}{4}
\end{equation}
But on the other hand $f_j(1/4)=f_{j-1}(1/4)-\frac{f_{j-2}(1/4)}{4}$, and substituting
\begin{eqnarray}
\sum_{l=j}^\infty   \frac{C_{lj}}{4^{l}}
&=&\frac{f_{j-1}(1/4)-\frac{f_{j-2}(1/4)}{4}}{4}
\nonumber \\ &=&
\sum_{l=j-1}^\infty   \frac{C_{lj}}{4^{l}}-\frac{1}{4} \sum_{l=j-2}^\infty   \frac{C_{lj}}{4^{l}}
\nonumber \\ &=&\frac{1}{2^{j-1}}-\frac{1}{4}\frac{1}{2^{j-2}}
\nonumber \\ &=&\frac{1}{2^{j}},
\end{eqnarray}
thus proving the Lemma.\qed
\end{proof}

Thus we finally obtain some partial sums in (\ref{eqn-Gsum}) as follows
 \begin{eqnarray}
 \vert G\vert &\leq& \sqrt{\bar \lambda} (\alpha-\beta) \frac{\mathrm{I_1}\left[2\sqrt{\bar \lambda \alpha \beta}\right]}{\sqrt{\alpha \beta}}+\sum_{i=0}^{\infty}  \sum_{j=1}^{j=\infty}\frac{F_{ij}}{2^{j}} 
 \nonumber \\ &=&
\sqrt{\bar \lambda} (\alpha-\beta) \frac{\mathrm{I_1}\left[2\sqrt{\bar \lambda \alpha \beta}\right]}{\sqrt{\alpha \beta}}
 \nonumber \\ &&
 +\sum_{i=0}^{\infty}  \sum_{j=1}^{j=\infty} 
 \frac{\bar\lambda^{i+1}\alpha^i\beta^i}{i!(i+1)!}(\alpha-\beta)\frac{\log^j\left(\frac{\alpha+\beta}{\alpha-\beta}\right)}{2^{j}j!},
 \end{eqnarray}
 which is a summable series both in $i$ and in $j$. Summing first in $i$ we find the same term as in (\ref{eqn-besselsum}), thus
  \begin{eqnarray}
 \vert G\vert &\leq&
\sqrt{\bar \lambda} (\alpha-\beta) \frac{\mathrm{I_1}\left[2\sqrt{\bar \lambda \alpha \beta}\right]}{\sqrt{\alpha \beta}}\left( \sum_{j=0}^{j=\infty} \frac{\log^j\left(\frac{\alpha+\beta}{\alpha-\beta}\right)}{2^{j}j!}\right),\quad
 \end{eqnarray}
 and the second term is the series of an exponential, therefore we finally reach
  \begin{eqnarray}
 \vert G\vert &\leq&
\sqrt{\bar \lambda} (\alpha-\beta) \frac{\mathrm{I_1}\left[2\sqrt{\bar \lambda \alpha \beta}\right]}{\sqrt{\alpha \beta}}\mathrm{e}^{\log \left(\sqrt{\frac{\alpha+\beta}{\alpha-\beta}}\right)}\nonumber \\ &=&
\sqrt{\bar \lambda(\alpha^2-\beta^2)} \frac{\mathrm{I_1}\left[2\sqrt{\bar \lambda \alpha \beta}\right]}{\sqrt{\alpha \beta}}.
 \end{eqnarray}
Substituting $\alpha$ and $\beta$ by the physical variables $r$, $\rho$, it is found that
   \begin{eqnarray}
 \vert G\vert &\leq&
\sqrt{\bar \lambda r\rho} \frac{\mathrm{I_1}\left[2\sqrt{\bar \lambda (r^2-\rho^2)}\right]}{\sqrt{ r^2-\rho^2}},
 \end{eqnarray}
 and going back to the original kernel $K$, we finally found that the successive approximation series converges and defines a kernel satisfying the following bound
 \begin{eqnarray}
 \vert K(r,\rho)\vert &\leq&
\rho\sqrt{\bar \lambda}  \frac{\mathrm{I_1}\left[2\sqrt{\bar \lambda (r^2-\rho^2)}\right]}{\sqrt{ r^2-\rho^2}},
 \end{eqnarray}
thus completing the proof of Theorem~\ref{th-mainsym}.

\section{Conclusion}\label{sec-conclusions}
This paper is a first step towards extending boundary stabilization results for constant-coefficient reaction-diffusion equations in disks to radially-varying coefficients. An assumption of revolution symmetry conditions has been made to simplify the equations, which become singular in the radius, complicating the design. The traditional backstepping method can be applied but the well-posedness of the kernel equation becomes challenging to prove. In this paper, a method of proof based on the properties of the  Catalan numbers has been successfully applied.

There are many open problems that still need to be tackled. For instance, the numerical solution of the kernel equation is not simple given the singularities that appear. Further regularity of the kernel is necessary to develop output-feedback results and does not seem to be simple to obtain. Extending the problem to spheres under revolution symmetry conditions is interesting from the point of view of applications, since these simplifications can be found in the engineering literature. Finally, dropping the revolution symmetry conditions would make the problem truly 2-D, but unfortunately the method of proof used in this work does not seem to extend, at least in a simple way.

%
%
\end{document}